\newtheorem{thm}{Theorem}[section]
\newtheorem{pro}[thm]{Proposition}
\newtheorem{defn}[thm]{Definition}
\begin{document}

\author{Mark Herman and Jonathan Pakianathan}
\title{On the distribution of distances in homogeneous compact metric spaces}
\maketitle

\begin{abstract}
We provide a simple proof that in any homogeneous, compact metric space of
diameter $D$, if one finds the average distance $A$ achieved in $X$ with respect to some
isometry invariant Borel probability measure, then
$$\frac{D}{2} \leq A \leq D.$$ This result applies equally to vertex-transitive graphs and to
compact, connected, homogeneous Riemannian manifolds.

We then classify the cases where one of the extremes occurs. In particular any homogeneous compact metric space where $A=\frac{D}{2}$ possesses a strict antipodal property which implies in particular that the distribution of distances in $X$ is symmetric about $\frac{D}{2}$ which is hence both mean and median of the distribution.

In particular, we show that the only closed, connected, \\ positive-dimensional Riemannian manifolds with this strict antipodal property are spheres.

\noindent
{\it Keywords: homogeneous space, metric space, diameter, Riemannian manifold}.

\noindent
2010 {\it Mathematics Subject Classification.}
Primary: 54E45, 54E70;
Secondary: 51F99, 05C12.
\end{abstract}


\section{Introduction}
Good general references for the mathematics used in this paper are \cite{Rud} for measure theory, \cite{Mun} for general topology,
\cite{Hat} for algebraic topology and \cite{Lee} for Riemannian geometry.

This paper was motivated by a question of Alan Kaplan that appears on the webpage (www.math.uiuc.edu/$\sim$west/openp): For any finite, connected, vertex-transitive graph, is the average distance between vertices at least as large as half the diameter of the graph?

We answer this question in the affirmative by establishing:

\begin{thm}
\label{theorem: vertextransitiveintro}
Let $X$ be a finite, connected, vertex transitive graph with $n$ vertices and diameter $D$.
Let $A = \frac{1}{n^2}\sum_{x,y \in V} d(x,y)$ be the average distance between vertices in this graph, then
$$
\frac{D}{2} \leq A \leq (1-\frac{1}{n})D
$$
If we define the average distance instead via $\bar{A} = \frac{1}{n(n-1)}\sum_{x \neq y} d(x,y)$ we have
$$
\frac{D}{2}(\frac{n}{n-1}) \leq \bar{A} \leq D.
$$
\end{thm}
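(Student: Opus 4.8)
The plan is to reduce both displayed inequalities to a single one-variable estimate. For a vertex $x$ I set $S(x)=\sum_{z\in V}d(x,z)$, and the first step is to note that vertex-transitivity makes $S(x)$ independent of $x$: given $x,x'\in V$, pick a graph automorphism $\phi$ with $\phi(x)=x'$; since $\phi$ is a distance-preserving bijection of $V$, reindexing by $w=\phi(z)$ gives $S(x)=\sum_z d(x',\phi(z))=\sum_w d(x',w)=S(x')$. Writing $S$ for this common value, I get $\sum_{x,y\in V}d(x,y)=\sum_{x\in V}S(x)=nS$, so $A=S/n$; and since the diagonal terms vanish, $\sum_{x\ne y}d(x,y)=nS$ as well, so $\bar A=S/(n-1)$. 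In particular $A=\frac{n-1}{n}\bar A$, which shows the two chains of inequalities are equivalent, so it suffices to prove $\tfrac{nD}{2}\le S\le (n-1)D$.

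The upper bound is the easy half: every distance is at most $D$ while $d(x,x)=0$, so $S=\sum_{z\ne x}d(x,z)\le (n-1)D$, which gives $A\le(1-\tfrac1n)D$ and $\bar A\le D$.

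For the lower bound I would play the triangle inequality against a diameter-realizing pair. Since $X$ is finite and connected of diameter $D$, there are vertices $x,y$ with $d(x,y)=D$; for every $z\in V$ the triangle inequality gives $d(x,z)+d(z,y)\ge D$, and summing over all $z$ yields $S(x)+S(y)\ge nD$, i.e. $2S\ge nD$. Hence $A=S/n\ge D/2$ and $\bar A=S/(n-1)\ge\frac{D}{2}\cdot\frac{n}{n-1}$, completing the argument.

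I do not expect a real obstacle here: the one idea carrying the proof is that vertex-transitivity collapses the double sum defining $A$ into $n$ times a single-vertex sum $S$, after which the lower bound is a one-line triangle-inequality computation and the upper bound a crude term-by-term estimate. The only points needing a little care are the bookkeeping of the vanishing diagonal terms and the identity $A=\frac{n-1}{n}\bar A$, which make the two forms of the statement equivalent rather than requiring separate proofs.
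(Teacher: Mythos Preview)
Your argument is correct and matches the paper's approach: the paper derives this theorem as the finite special case of Theorem~\ref{thm: mainantipodal}, whose proof uses exactly the two ingredients you isolate, namely that transitivity makes the one-point sum $S(x)=\int_X d(x,y)\,m(x)$ constant, and that the triangle inequality applied to a diameter-realizing pair $(y,O_y)$ gives $S(y)+S(O_y)\ge nD$, hence $2S\ge nD$. Your direct combinatorial write-up is simply the specialization of that measure-theoretic argument to counting measure on a finite vertex-transitive graph.
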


These bounds are sharp: For any complete graph $K_n$, $\bar{A}=D=1$. On the other hand,
for a $d$-dimensional hypercube graph $Q_d$, we have $D=d$, and
$A=\frac{1}{2^d} \sum_{k=0}^{d} k\binom{n}{k} = \frac{d2^{d-1}}{2^d}=\frac{d}{2}=\frac{D}{2}$.

A similar bound for the expected distance squared in a vertex transitive graph in terms of the diameter $D$ was obtained in \cite{NRa09}:
$\frac{D^2}{8} \leq E[d^2] \leq D^2.$ However, when this argument is applied to the expected distance it results in a weaker result than
Theorem~\ref{theorem: vertextransitiveintro}.

Theorem~\ref{theorem: vertextransitiveintro} follows from a more general theorem on
homogeneous compact metric spaces, i.e., compact metric spaces whose isometry group acts transitively on points. (Indeed we prove the theorem for antipodal compact metric spaces - see section~\ref{section: antipodal} for definitions.)

\begin{thm}
\label{thm: main} Let $X$ be a homogeneous compact metric space and $m$ a Borel probability measure on $X$ invariant under all isometries. Let $$A=\int_{X \times X} d(x,y) m(x)m(y)$$ be the average distance in the metric space with respect to the product measure $m \times m$ on $X \times X$,
then
$$
\frac{D}{2} \leq A \leq \mu D
$$
where $1-\mu$ is the $m \times m$ measure of the diagonal in $X \times X$. $\mu=1$ if points have $m$-measure zero in $X$ and $\mu=(1-\frac{1}{n})$ if $|X|=n$ is finite.
\end{thm}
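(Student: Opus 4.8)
The plan is to treat the two inequalities separately, the upper bound being essentially immediate and the lower bound resting on a single averaging observation.

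For the upper bound, I would use that $d(x,y)\le D$ everywhere while $d(x,y)=0$ on the diagonal $\Delta\subseteq X\times X$, so that
\[
A=\int_{(X\times X)\setminus\Delta} d(x,y)\,dm(x)\,dm(y)\ \le\ D\cdot(m\times m)\bigl((X\times X)\setminus\Delta\bigr)=\mu D .
\]
The two parenthetical claims about $\mu$ then reduce to computing $(m\times m)(\Delta)=\int_X m(\{x\})\,dm(x)$ by Tonelli: this is $0$ when every singleton is $m$-null, and when $|X|=n$ homogeneity forces $m$ to be uniform (the isometry group permutes the $n$ points transitively, so $m(\{x\})=\tfrac1n$ for every $x$), giving $(m\times m)(\Delta)=\tfrac1n$ and hence $\mu=1-\tfrac1n$.

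For the lower bound, the key point is that the function $f(p):=\int_X d(p,x)\,dm(x)$ is constant on $X$. Given $p,p'\in X$, homogeneity supplies an isometry $g$ with $g(p)=p'$, and the substitution $x\mapsto g(x)$ together with the isometry invariance of $m$ and of $d$ gives $f(p')=\int_X d(g(p),g(x))\,dm(x)=\int_X d(p,x)\,dm(x)=f(p)$. Since $\int_X f(p)\,dm(p)=A$ by Tonelli, this common value is exactly $A$, so $f\equiv A$. Now invoke compactness: $d$ is continuous on the compact space $X\times X$, so the diameter is attained, say $d(p,q)=D$. The triangle inequality gives $d(p,x)+d(x,q)\ge d(p,q)=D$ for every $x\in X$; integrating in $x$ against $m$ yields $f(p)+f(q)\ge D$, i.e.\ $2A\ge D$.

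I do not anticipate a serious obstacle. The only points needing care are measure-theoretic bookkeeping — measurability of $f$ and the uses of Tonelli's theorem, which are unproblematic since $d$ is bounded and continuous — and making sure the hypotheses are used where they are needed: compactness is precisely what guarantees the diameter is realized by an actual pair $(p,q)$, and the full isometry invariance of $m$ (not merely invariance under some transitive family of self-maps) is what makes $f$ constant. I would also remark in passing that connectedness plays no role in this argument; the specializations to vertex-transitive graphs and to homogeneous Riemannian manifolds follow by taking $m$ to be normalized counting measure or normalized Riemannian volume, respectively.
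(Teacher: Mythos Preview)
Your argument is correct and is essentially the same as the paper's: both prove the upper bound by bounding $d$ by $D$ off the diagonal, and both prove the lower bound by writing $A=\int_X A_y\,dm(y)$ with $A_y=\int_X d(x,y)\,dm(x)$ and then combining the triangle inequality at a diametral pair with isometry invariance of $m$ to get $A_y+A_{O_y}\ge D$. The only noteworthy difference is one of packaging: you use full homogeneity to show $A_y$ is constant (equal to $A$) and then apply the triangle inequality at a single diametral pair, whereas the paper uses only the weaker \emph{antipodal} hypothesis (each $y$ has an antipode $O_y$ reached by some isometry), deduces merely $A_y=A_{O_y}$, and obtains $A_y\ge D/2$ pointwise --- thereby getting the result for a slightly larger class of spaces than homogeneous ones.
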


Theorem~\ref{thm: main} applies to any connected, finite, vertex-transitive graph under the uniform probability measure on vertices. It also applies to any compact, connected, homogeneous Riemmannian manifold $M$. If $G$ is the isometry group of $M$ then $G$ is a Lie group and
there is closed subgroup $K$ and continuous bijection $\theta: G/K \to M$. The push forward of a suitably normalized Haar measure on $G$ hence provides a $G$-invariant measure $m$ on $M$. Examples include spheres with spherical measure under the round spherical metric or under
the Euclidean metric (the isometry group is $O(n)$ for either metric). Any compact Lie group
such as torii under the probability Haar measure and under a left invariant Riemmanian metric provide other examples for which Theorem~\ref{thm: main} applies. Other examples include Grassmann spaces of various flavors.

In the case of unit spheres $S^d=\{x \in \mathbb{R}^{d+1} | \| x \|=1 \}$ under the spherical metric and measure, one has $A=\frac{\pi}{2}, D=\pi$ and so $A=\frac{D}{2}$.

In the case of the $p$-adic integers, a compact metric Abelian group equipped with its Haar measure $m$ and the standard $p$-adic metric, it is easy to compute that $D=1$ and $A=\frac{p}{p+1}$. As the prime $p \to \infty$ we have $A \to D$.

We also characterize the homogeneous compact metric spaces which achieve the bounds in
Theorem~\ref{thm: main}.

\begin{thm}
\label{thm: extremals}
Let $X$ be a homogeneous compact metric space and $m$ a Borel probability measure on $X$ invariant under all isometries and let $A$ be the average distance in $X$, and $D$ the diameter of $X$. \\
If $A=\mu D$, then $X$ is finite and the metric is a scaling of the standard discrete metric
$$
d(x,y)=\begin{cases}
1 \text{ if } x \neq y \\
0 \text{ otherwise}
\end{cases}
$$
and $m$ is the uniform probability measure.

On the other hand, $A=\frac{D}{2}$ if and only if $X$ is a strictly antipodal space, i.e., every point $x$ has a unique antipode $O_x$ such that $d(x, O_x)=D$ with the additional conditions
that there is an isometry of $X$ taking $x$ to $O_x$ and such that for any $y \in X$ one has
$$
D=d(x,y) + d(y,O_x).
$$
Furthermore, in any strictly antipodal space, the antipodal map $O: X \to X$ taking each $x$ to its unique antipode is an isometry of $X$ which is a central element of order exactly two in
the isometry group of $X$. It equips $X$ with a free $\mathbb{Z}/2\mathbb{Z}$-action.
In such a space the distribution of distances is symmetric about the mean (which is hence also the median) $\frac{D}{2}$. In other words, if $0 \leq a \leq D$ then
$$
Pr(d(x,y) \leq a) = Pr( d(x,y) \geq D-a ).
$$
where $Pr$ is probability with respect to the $m \times m$ probability measure on $X \times X$.
\end{thm}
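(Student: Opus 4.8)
\noindent The plan is to dispatch the two extremal cases independently: in each, first convert the measure-theoretic equality into a \emph{pointwise} metric identity, and then read off the rigid structure. Two preliminary facts will be used throughout. Writing $G$ for the isometry group of $X$, which by hypothesis acts transitively, invariance of $m$ makes $f(x):=\int_X d(x,y)\,dm(y)$ a $G$-invariant function on $X$, hence constant and equal to $A$; by the same token $x\mapsto\sup_{y}d(x,y)$ is constant and, by compactness, attained, so every point of $X$ has at least one point at distance exactly $D$. The second fact --- the real engine --- is that $m$ has full support: for nonempty open $U$ the translates $\{gU:g\in G\}$ cover $X$, a finite subcover exists by compactness, and then $m(U)>0$. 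In particular any continuous function on $X$ (or on $X\times X$) is determined by its values on any set of full $m$- (resp.\ $m\times m$-) measure.

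For the upper extreme, recall that Theorem~\ref{thm: main} is proved by the estimate $A=\int_{(X\times X)\setminus\Delta}d\le D\cdot(m\times m)\!\big((X\times X)\setminus\Delta\big)=\mu D$, with $\Delta\subseteq X\times X$ the diagonal; so $A=\mu D$ forces $d(x,y)=D$ for $(m\times m)$-almost every $(x,y)\notin\Delta$. If points had $m$-measure zero then $m\times m$ would have full support, and continuity would give $d\equiv D$ off the diagonal, which is absurd since $d|_\Delta=0$ (barring the trivial one-point case). Hence $X$ is finite; transitivity of $G$ then forces $m$ to be uniform, ``almost every'' becomes ``every'', and $d(x,y)=D$ for all $x\ne y$, i.e.\ the metric is a scaling of the discrete metric.

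For the lower extreme, fix $p,q$ with $d(p,q)=D$. From $2A=f(p)+f(q)=\int_X\!\big(d(p,z)+d(z,q)\big)\,dm(z)\ge D$ one sees that $A=\tfrac D2$ is equivalent to $d(p,z)+d(z,q)=D$ for $m$-a.e.\ $z$, hence --- by full support and continuity --- for \emph{every} $z$. Given any $x$, choosing $\phi\in G$ with $\phi(p)=x$ and setting $O_x:=\phi(q)$ transports the identity to $d(x,y)+d(y,O_x)=D$ for all $y$; taking $y=O_x$ gives $d(x,O_x)=D$, and if $d(x,y')=D$ the identity forces $y'=O_x$, establishing uniqueness of the antipode (and that $O_x$ does not depend on $\phi$). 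Since homogeneity automatically supplies an isometry carrying $x$ to $O_x$, this shows $A=\tfrac D2$ implies $X$ is strictly antipodal; the converse is immediate once we know $O$ is an isometry (below), as then $(x,y)\mapsto(x,O_y)$ preserves $m\times m$ and carries $d$ to $D-d$, forcing $A=D-A$.

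It remains to verify the properties of $O$ and the symmetry statement. Uniqueness of antipodes gives $O\circ O=\mathrm{id}$; applying the defining identity with base point $O_x$ yields $d(O_x,y)=D-d(x,y)$ for all $x,y$, whence $d(O_x,O_y)=D-d(x,O_y)=d(x,y)$, so $O$ is an isometry; $O_x\ne x$ since $d(x,O_x)=D>0$, so $O$ generates a free $\mathbb{Z}/2$-action; and for any isometry $g$ the point $g(O_x)$ lies at distance $D$ from $g(x)$, hence equals $O_{g(x)}$, so $gO=Og$ and $O$ is central of order exactly two. Finally $m$ is $O$-invariant, so the measure-preserving involution $(x,y)\mapsto(x,O_y)$ of $(X\times X,m\times m)$ sends the random variable $d(x,y)$ to $d(x,O_y)=D-d(x,y)$; the two therefore have the same distribution, which is exactly the asserted symmetry $Pr(d(x,y)\le a)=Pr(d(x,y)\ge D-a)$, so $\tfrac D2$ is simultaneously the mean and a median of the distribution of distances. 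The only genuine obstacle I anticipate is the upgrade from ``almost everywhere'' to ``everywhere'' --- this is where full support, and hence compactness together with transitivity, does the essential work; once $d(x,y)+d(y,O_x)=D$ holds for \emph{all} $y$, everything else is formal manipulation with the triangle inequality, uniqueness of antipodes, and invariance of $m$.
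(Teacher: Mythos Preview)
Your proposal is correct and follows essentially the same route as the paper: the paper splits the statement into two propositions (one for each extreme) plus a theorem on strictly antipodal spaces, and in each case uses exactly your ``engine'' --- that an isometry-invariant probability measure on a compact homogeneous space assigns positive mass to every nonempty open set --- to promote an almost-everywhere equality to a pointwise one. The only cosmetic differences are that the paper derives the full-support fact inline (as ``a measure-zero open cover of a compact space is impossible'') rather than stating it once up front, and that for the upper extreme in the infinite case the paper works with the single-variable sets $S_y=\{x:d(x,y)<D\}$ rather than concluding $d\equiv D$ off the diagonal; your phrasing ``absurd since $d|_\Delta=0$'' is valid but could be made one step more explicit (an infinite compact metric space has a limit point, forcing $D=0$).
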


In the case of unit spheres $S^d$ with spherical measure and metric, the distances are
uniformly distributed in the interval $[0,\pi]$. The antipodal map is the standard antipodal map
$x \to -x$.
In the case of distances in the hypercube graph in dimension $d$, the distances are distributed
according to the binomial distribution with parameters $n=d$ and $p=\frac{1}{2}$. The antipodal
map is the map that swaps zeros and ones in binary strings.
In the case of distances in a cycle graph $C_{2n}$, the average distance is
$A=\frac{1}{2n}(1(0) + 2(1) + 2(2) + \dots + 2(n-1) + 1(n))=\frac{n}{2}=\frac{D}{2}$.
The probability distribution of distances is
$$
Pr(d=j) =\begin{cases} \frac{1}{n} \text{ if } 1 \leq j \leq n-1 \\
\frac{1}{2n} \text{ if } j=0, n
\end{cases}
$$

Finally, we show in Theorem~\ref{thm:spheres} that the only closed, connected, \\ positive-dimensional Riemannian manifolds with this strict antipodal property are spheres.

\section{Proof of Theorem~\ref{thm: main}}
\label{section: antipodal}

We first define various concepts of antipodal spaces which will be useful.

\begin{defn} Let $X$ be a compact metric space of diameter $D$.
We say that $X$ is {\bf antipodal} if for every $x \in X$ there is at least one antipode
$O_x$ such that $d(x,O_x)=D$. We further require that there is an isometry of $X$ taking
$x$ to $O_x$.

We say that $X$ is {\bf uniquely antipodal} if it is antipodal and if each $x \in X$
has a unique antipode $O_x$.

Finally we say that $X$ is {\bf strictly antipodal} if it is antipodal and if for any $x \in X$, and
antipode $O_x$ of $x$ we have:
$$
D=d(x,y)+d(y,O_x)
$$
for all $y \in X$.
\end{defn}

Note that any homogeneous compact metric space is antipodal. This is because compactness guarantees a pair of points $u,v$ such that $d(u,v)=D$. Then the transitivity of the action of the isometry group on points, guarantees that every $x \in X$ has at least one $O_x$ such that
$d(x,O_x)=D$. Furthermore transitivity also guarantees an isometry taking $x \to O_x$ and
hence ensures the property of being an antipodal space.

\begin{thm}
\label{thm: mainantipodal} Let $X$ be a compact antipodal metric space of diameter $D$. Let
$m$ be a Borel probability measure on $X$ invariant under isometries and let
$$
A=\int_{X \times X} d(x,y) m(x)m(y).
$$
Then $\frac{D}{2} \leq A \leq \mu D$
where $\mu = 1 - (m\times m)(\Delta)$ where $\Delta$ is the diagonal in $X \times X$.
\end{thm}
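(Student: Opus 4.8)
The plan is to prove the two bounds separately, with the lower bound $A \geq D/2$ being the real content and the upper bound $A \leq \mu D$ being essentially immediate. For the upper bound, observe that $d(x,y) \leq D$ everywhere, with equality forced to fail (trivially) on the diagonal since $d(x,x) = 0 < D$ (assuming $D > 0$; the case $D = 0$ is trivial). Hence $\int_{X\times X} d(x,y)\, dm\, dm \leq D \cdot (m\times m)(X\times X \setminus \Delta) = D(1 - (m\times m)(\Delta)) = \mu D$. The identifications $\mu = 1$ when points have measure zero and $\mu = 1 - 1/n$ when $|X| = n$ (so the diagonal has $m\times m$ measure $n \cdot (1/n)^2 = 1/n$) then follow from the invariant measure being uniform in the finite case, which itself follows from transitivity of the isometry group.

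\textbf{For the lower bound}, the key idea is to exploit the antipodal structure together with invariance of $m$. Fix $x \in X$ and let $O_x$ be an antipode, so $d(x, O_x) = D$, and by the triangle inequality, for every $y \in X$,
$$
D = d(x, O_x) \leq d(x,y) + d(y, O_x).
$$
Integrating over $y$ with respect to $m$ gives
$$
D \leq \int_X d(x,y)\, dm(y) + \int_X d(y, O_x)\, dm(y).
$$
Now the antipodal hypothesis guarantees an isometry $\varphi$ of $X$ with $\varphi(x) = O_x$; since $m$ is isometry-invariant, the substitution $y \mapsto \varphi(y)$ shows $\int_X d(y, O_x)\, dm(y) = \int_X d(\varphi^{-1}(y), x)\, dm(y) = \int_X d(y, x)\, dm(y)$. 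Actually it is cleaner to argue directly: the function $x \mapsto \int_X d(x,y)\, dm(y)$ is constant on $X$ — call this constant $c$ — because any two points are related by an isometry and $m$ is invariant. Then $A = \int_X c\, dm(x) = c$, and the displayed inequality reads $D \leq c + (\text{the analogous integral based at } O_x) = c + c = 2A$, giving $A \geq D/2$.

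\textbf{The main subtlety}, and the step I would be most careful about, is justifying that $x \mapsto \int_X d(x,y)\, dm(y)$ is genuinely constant and that all integrals in sight are well-defined. Well-definedness is fine: $d$ is continuous on the compact space $X \times X$, hence bounded and Borel measurable, so Fubini applies to the finite product measure $m \times m$. Constancy requires that for the isometry $\varphi$ with $\varphi(x) = x'$, invariance of $m$ gives $\int_X d(x', y)\, dm(y) = \int_X d(\varphi(x), \varphi(z))\, dm(z) = \int_X d(x, z)\, dm(z)$, using $d(\varphi(x),\varphi(z)) = d(x,z)$ and the change of variables $y = \varphi(z)$; here one needs that the isometry group acts transitively, which is the homogeneity hypothesis (and for the purely antipodal version, one only needs the single isometry sending $x$ to $O_x$, which suffices to run the two-term argument above base-pointed at a fixed $x$). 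I would also remark that strictness of the triangle inequality is never needed for this direction — only for the equality analysis in Theorem~\ref{thm: extremals} — so the proof here is short once the invariance bookkeeping is set up correctly.
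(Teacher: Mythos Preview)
Your proposal is correct and follows essentially the same route as the paper: bound $A$ above by integrating $d\leq D$ off the diagonal, and bound it below by fixing a basepoint, applying the triangle inequality $D\leq d(x,y)+d(y,O_x)$, integrating in $y$, and using the isometry sending $x$ to $O_x$ together with $m$-invariance to identify the two resulting integrals (equivalently, $A_x=A_{O_x}$), yielding $A_x\geq D/2$ for every $x$ and hence $A\geq D/2$ after one more integration via Fubini. Your only wobble is cosmetic: the ``cleaner'' constancy argument you interject invokes full transitivity, which the antipodal hypothesis alone does not give, but you correctly flag this yourself and note that the single isometry $x\mapsto O_x$ suffices---that is exactly how the paper proceeds.
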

\begin{proof}
First note $A = \int_{X \times X - \Delta} d(x,y)m(x)m(y) \leq D (m\times m)(X \times X - \Delta)$
and so $A \leq \mu D$.

On the other hand as $X \times X$ is compact and $d: X \times X \to \mathbb{R}$ continuous, we have
$d \in L^1(m \times m)$ and so we may apply Fubini's Theorem:
$$
A = \int_X (\int_X d(x,y) m(x) ) m(y).
$$
Thus to establish the lower bound, it is enough to show that
$A_y = \int_X d(x,y) m(x) \geq \frac{D}{2}$ for all $y \in X$.

To do this let $O_y$ be an antipode of $y$, as there is an isometry $g$ which takes
$y$ to $O_y$, and as $m$ is isometry invariant, we have $A_y=A_{O_y}$.

Then

$$
A_y = \int_X d(x,y) m(x) \geq \int_X (d(y,O_y) - d(O_y,x)) m(x)
= D - A_{O_y}.
$$
Thus $2A_y \geq D$ for any $y \in Y$ and we are done.

\end{proof}

When $X$ is a finite set of size $n$ and $m$ is uniform measure, the factor $\mu$ in
Theorem~\ref{thm: mainantipodal} is $\mu=(1-\frac{1}{n})$. On the other hand if points have
$m$-measure zero in $X$ then the diagonal $\Delta$ has $m \times m$ measure zero in
$X \times X$ by Fubini's Theorem and so $\mu=1$.

We have already discussed examples in the intro that show the bounds in Theorem~\ref{thm: mainantipodal} are sharp. We now characterize the examples which achieve the extremes of these bounds at least in the case of compact homogeneous metric spaces.

\begin{pro}
Let $X$ be a compact homogeneous metric space of diameter $D$ and $m$ an isometry invariant Borel probability measure on $X$. Suppose additionally that the average distance $A$ satisfies $A=\mu D$
where $\mu=1-(m \times m)(\Delta)$.

Then $X$ is a finite set equipped with a scalar multiple of the discrete metric and $m$ is
uniform measure.
\end{pro}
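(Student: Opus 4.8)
The plan is to trace through the proof of the upper bound in Theorem~\ref{thm: mainantipodal} and extract equality conditions. Recall that the only inequality used there was $A = \int_{X \times X - \Delta} d(x,y)\, m(x)m(y) \leq D\, (m \times m)(X \times X - \Delta)$. Since $d(x,y) \leq D$ everywhere and $d$ is continuous, equality forces $d(x,y) = D$ for $(m \times m)$-almost every $(x,y)$ off the diagonal. First I would promote this ``almost every'' to ``every'': fix $x_0 \in X$ and consider the continuous function $y \mapsto d(x_0, y)$; I would like to conclude it equals $D$ for all $y \neq x_0$.

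The main obstacle is that ``almost every'' with respect to $m \times m$ does not immediately say anything pointwise unless $m$ has full support and points can be separated measure-theoretically. So the key intermediate step is to show $\operatorname{supp}(m) = X$, which follows from homogeneity: the isometry group acts transitively, the support of an isometry-invariant measure is an isometry-invariant closed set, hence is either empty or all of $X$, and it is nonempty since $m$ is a probability measure. Next, I would argue that for a fixed $x_0$, the set $\{y : d(x_0,y) < D\}$ is open; if it contained a point $y_0 \neq x_0$, then by continuity it contains a neighborhood $U$ of $y_0$, and by continuity of $d$ near $x_0$ (staying away from $y_0$) I can find a neighborhood $V$ of $x_0$ with $V \times U$ disjoint from $\Delta$ and $d < D$ on $V \times U$; since $m$ has full support, $(m\times m)(V \times U) = m(V)m(U) > 0$, contradicting that $d = D$ a.e.\ off $\Delta$. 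Hence $d(x_0, y) = D$ for every $y \neq x_0$, i.e., the metric is the discrete metric scaled by $D$.

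Once the metric is the scaled discrete metric, I would finish by showing $X$ is finite. Under the discrete metric every point is isolated, so $X$ is a discrete space; being compact it is finite, say $|X| = n$. Finally, isometry invariance of $m$ together with homogeneity (the isometry group, which now includes all permutations-by-isometries, acts transitively) forces $m(\{x\})$ to be the same for all $x$, hence $m$ is the uniform measure; and then $\mu = 1 - (m\times m)(\Delta) = 1 - \tfrac{1}{n}$, consistent with the hypothesis $A = \mu D$, since indeed $A = \tfrac{n^2 - n}{n^2} D = (1 - \tfrac1n)D$ for the scaled discrete metric. The only delicate point is the promotion from the measure-theoretic equality to the pointwise one, and that is exactly where the full-support consequence of homogeneity does the work; everything else is bookkeeping.
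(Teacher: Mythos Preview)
Your proposal is correct. Both you and the paper exploit the same underlying mechanism---homogeneity forbids nonempty open $m$-null sets---but the organization differs. The paper splits into cases: for finite $X$ it reads off the discrete metric directly from $A=\mu D$; for infinite $X$ it first argues $\mu=1$, then uses homogeneity to get $A_y=D$ for every $y$, so each $S_y=\{x:d(x,y)<D\}$ is an open $m$-null neighborhood of $y$, and a finite subcover of $\{S_y\}$ contradicts $m(X)=1$. You instead isolate the lemma $\operatorname{supp}(m)=X$ up front (this is equivalent to the paper's covering-by-null-opens contradiction), use it in the product space to promote the almost-everywhere equality $d=D$ off the diagonal to a pointwise one, and only afterward deduce finiteness from discreteness plus compactness. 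Your route avoids the finite/infinite case split and is slightly more streamlined; the paper's route keeps the compactness argument visible and handles the finite case by a direct computation rather than passing through the support lemma.
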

\begin{proof}
First let us handle the case $|X|=n$ is finite. Then as $X$ is homogeneous and $m$ is an isometry invariant Borel probability measure, it must assign every point of $X$ measure
$\frac{1}{n}$ and hence $\mu=1-\frac{1}{n}$. Writing $X=\{x_1,\dots,x_n\}$ we have
$$
A=\frac{1}{n^2} \sum_{j,k} d(x_j,x_k) = \mu D
$$
if and only if $d(x_j,x_k)=D$ when $j \neq k$ i.e. if the metric $d$ is $D$ times the discrete metric
on $X$.

On the other hand, when $X$ is infinite, points must have $m$-measure zero as every point has
equal measure by homogeneity and $m(X) = 1 < \infty$. Fubini's Theorem then shows
$(m \times m)(\Delta)=0$ and so $\mu=1$.

Now if $A=\mu D=D$ in this case we would have
$$
\int_{X \times X} d(x,y) m(x)m(y) = \int_X A_y m(y) = D.
$$
By homogeneity $A_y$ is independent of $y \in X$ and so this equation implies
$A_y = \int_X d(x,y)m(x)=D$ for all $y \in Y$.

Thus for any $y \in Y$, the set $S_y=\{ x \in X | d(x,y) < D \}$ has $m$-measure zero.
As $d: X \times X \to \mathbb{R}$ is continuous, $S_y$ is an open neighborhood of $y$.
The collection $\{ S_y | y \in X \}$ is an open cover of $X$ by open sets of $m$-measure zero.
As $X$ is compact, it is covered by a finite number of these and hence has $m$-measure zero
which contradicts that $m$ is a probability measure. Thus it is impossible for $A=\mu D$
when $X$ is infinite.
\end{proof}

\begin{pro}
\label{pro: lower}
Let $X$ be a compact homogeneous metric space of diameter $D$. Let $m$ be an isometry invariant Borel probability measure on $X$. Then the average distance $A$ satisfies
$A=\frac{D}{2}$ if and only if $X$ is a strictly antipodal space.
\end{pro}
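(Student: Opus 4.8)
The plan is to prove both implications, with the forward direction (i.e.\ $A = \frac{D}{2} \implies$ strictly antipodal) being the substantive one, since the reverse direction is essentially a packaging of the inequality chain already used in the proof of Theorem~\ref{thm: mainantipodal}. For the reverse direction: assume $X$ is strictly antipodal. Then for any $y$ and any antipode $O_y$ we have $d(x,y) = D - d(x,O_y)$ for all $x$, so integrating in $x$ gives $A_y = D - A_{O_y}$; but $A_y = A_{O_y}$ by isometry invariance of $m$ (there is an isometry taking $y$ to $O_y$), hence $A_y = \frac{D}{2}$ for every $y$, and integrating in $y$ yields $A = \frac{D}{2}$. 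No homogeneity beyond the antipodal hypothesis is actually needed here.

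For the forward direction, suppose $A = \frac{D}{2}$. Since $X$ is homogeneous, $A_y = \int_X d(x,y)\,m(x)$ is independent of $y$, so $A_y = \frac{D}{2}$ for all $y$. Now I revisit the inequality from the proof of Theorem~\ref{thm: mainantipodal}: for each $y$ with chosen antipode $O_y$,
$$
\frac{D}{2} = A_y = \int_X d(x,y)\,m(x) \geq \int_X \bigl(d(y,O_y) - d(O_y,x)\bigr)\,m(x) = D - A_{O_y} = \frac{D}{2}.
$$
Equality forces $\int_X \bigl(d(x,y) - D + d(x,O_y)\bigr)\,m(x) = 0$. The integrand is continuous and, by the triangle inequality $d(x,y) + d(x,O_y) \geq d(y,O_y) = D$, it is nonnegative; a nonnegative continuous function with zero integral against $m$ must vanish on the support of $m$. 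The key point I then need is that the support of $m$ is all of $X$: by homogeneity every nonempty open set has the same positive measure (an open ball around one point maps onto an open ball around any other under an isometry), so $\mathrm{supp}(m) = X$. Therefore $d(x,y) + d(x,O_y) = D$ for \emph{all} $x \in X$, which is exactly the strict antipodal identity. This holds for every $y$ and every antipode $O_y$, and an isometry taking $y$ to $O_y$ exists by antipodality (automatic for homogeneous spaces), so $X$ is strictly antipodal.

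It remains to extract uniqueness of the antipode, which the strict identity gives for free: if $O_y$ and $O_y'$ are both antipodes of $y$, apply the identity with $x = O_y'$ to get $D = d(O_y', y) + d(O_y', O_y) = D + d(O_y', O_y)$, forcing $d(O_y', O_y) = 0$, i.e.\ $O_y' = O_y$. I expect the main obstacle to be the support argument --- specifically, being careful that ``$\int f\,dm = 0$ with $f \geq 0$ continuous'' only kills $f$ on $\mathrm{supp}(m)$, and then justifying cleanly that homogeneity forces $\mathrm{supp}(m) = X$ (one must note that an isometry-invariant Borel probability measure on a compact metric space assigns positive measure to every nonempty open set, using that finitely many translates of any given ball cover $X$). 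Everything else is a rearrangement of inequalities already established.
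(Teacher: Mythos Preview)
Your proof is correct and follows essentially the same route as the paper's: both reduce the equality case to showing that a nonnegative continuous function with zero $m$-integral must vanish identically on $X$, and both justify this via homogeneity plus compactness (you phrase it as $\mathrm{supp}(m)=X$ using a finite-cover-by-translates argument, the paper as the contrapositive---a measure-zero open neighborhood of one point propagates by homogeneity to a measure-zero open cover of $X$). Your final uniqueness-of-antipode paragraph is correct but unnecessary here, since the paper's definition of \emph{strictly antipodal} does not include uniqueness; that is derived separately in Theorem~\ref{thm:strictantipode}.
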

\begin{proof}
Again $A_y = \int_X d(x,y) m(x)$ is independent of $y \in X$ by homogeneity and invariance of
$m$ under isometries. Thus $A=A_y$ for all $y \in X$ so if $A=\frac{D}{2}$ we have
$A_y = \int_X d(x,y) m(x) = \frac{D}{2}$. As $X$ is homogeneous, it is antipodal so let $O_y$ be an antipode of $y$. Then the inequality:
$$
A_y = \int_X d(x,y) m(x) \geq \int_X (d(y,O_y)-d(O_y,x))m(x) = D - A_{O_y}
$$
is an equality and so we conclude that the set
$T_y = \{ x \in X | d(x,y) > d(y,O_y) - d(O_y,x) \}$ is a set of $m$-measure zero.
If this set were nonempty, it would provide an $m$-measure zero open neighborhood of some
point in $X$. By homogeneity, every point of $X$ would have a measure zero open neighborhood.
Compactness of $X$ would then imply a finite open cover of $X$ by measure zero open sets contradicting that $m$ is a probability measure on $X$. Thus we conclude
$A=\frac{D}{2}$ implies that $d(y,O_y)=D=d(y,x)+d(x,O_y)$ for all $x, y \in X$ i.e., that
$X$ is a strictly antipodal space. Conversely it is easy to check that if $X$ is strictly antipodal,
then $A=\frac{D}{2}$.
\end{proof}

In the next section, we further study the structure of strictly antipodal spaces and show that the
distribution of distances in such spaces is symmetric around the mean distance.

\section{Strictly antipodal spaces}

In this section we study the structure of strictly antipodal spaces. By Proposition~\ref{pro: lower},
these are the primary class of compact metric spaces whose average distance (with respect to any isometry invariant Borel probability measure) is equal to half their diameter.

\begin{thm}
\label{thm:strictantipode}
Let $X$ be a strictly antipodal compact metric space. Then: \\
(1) $X$ is uniquely antipodal. \\
(2) The antipodal map $O: X \to X$ which takes each element $x$ to its unique antipode $O_x$
is an isometry and is a central element of order two in the isometry group of $X$. \\
(3) $X$ has a free $\mathbb{Z}/2\mathbb{Z}$-action via this antipode. Thus if its Euler characteristic is defined, it is even. \\
(4) If $m$ is a Borel probability measure on $X$ invariant under isometries, then the distribution
of distances in $X$ is symmetric about its average $\frac{D}{2}$ which is hence also the median distance. More precisely for any $0 \leq a \leq D$ we have
$Pr( d(x,y) \leq a) = Pr(d(x,y) \geq D-a)$
where $Pr$ is probability with respect to the measure $m \times m$ on $X \times X$.
\end{thm}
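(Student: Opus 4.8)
The plan is to work through the four claims in sequence, extracting everything from the single defining relation $D = d(x,y) + d(y, O_x)$ for all $y$.

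For (1), suppose $x$ has two antipodes $O_x$ and $O_x'$. Apply the strict antipodal identity with the antipode $O_x$ and the choice $y = O_x'$: this gives $D = d(x, O_x') + d(O_x', O_x) = D + d(O_x', O_x)$, forcing $d(O_x', O_x) = 0$, hence $O_x' = O_x$. So the antipode is unique and the map $O\colon X \to X$ is well-defined and an involution (applying the identity with $y = x$ shows $d(x, O_x) = D$ is symmetric, and uniqueness gives $O_{O_x} = x$).

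For (2), I first check $O$ is an isometry. Fix $x, z \in X$. Using the strict identity for $x$ with $y = z$ gives $d(x,z) + d(z, O_x) = D$, and using it for $z$ with $y = O_x$ gives $d(z, O_x) + d(O_x, O_z) = D$; subtracting yields $d(O_x, O_z) = d(x, z)$, so $O$ is an isometry. For continuity and bijectivity: it is its own inverse, so it is a homeomorphism. To see $O$ is central in the isometry group $G$, let $g \in G$ be any isometry. Then $d(gx, g O_x) = d(x, O_x) = D$, so $g O_x$ is an antipode of $gx$; by uniqueness $g O_x = O_{gx}$, i.e., $gO = Og$. Since $O \neq \mathrm{id}$ (as $D > 0$ for a nontrivial space; the one-point case is degenerate and can be excluded or noted), $O$ has order exactly two.

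For (3), since $O$ is a fixed-point-free involution — if $O_x = x$ then $0 = d(x, O_x) = D$, impossible unless $X$ is a point — it generates a free $\mathbb{Z}/2\mathbb{Z}$-action on $X$; the Euler characteristic statement is then the standard fact that a free $\mathbb{Z}/2\mathbb{Z}$-action forces $\chi(X) = 2\chi(X/O)$, hence even, whenever $\chi$ is defined. For (4), I push the symmetry of distances through the measure-preserving involution $O$. Since $m$ is isometry-invariant and $O$ is an isometry, the map $(x,y) \mapsto (x, O_y)$ preserves $m \times m$. Under this map $d(x,y)$ becomes $d(x, O_y) = D - d(x,y)$ by the strict identity. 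Hence for any $0 \le a \le D$,
$$
Pr(d(x,y) \le a) = Pr(d(x, O_y) \le a) = Pr(D - d(x,y) \le a) = Pr(d(x,y) \ge D - a),
$$
which is exactly the claimed symmetry; taking $a = D/2$ and combining the two tail statements shows $D/2$ is a median. The main obstacle is nothing deep — it is just making sure the strict antipodal identity is invoked with the correct pairing of base point and test point at each step (especially in proving $O$ is an isometry, where one needs two applications with swapped roles), and handling the trivial one-point case cleanly in the order-two and freeness claims.
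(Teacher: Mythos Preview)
Your proposal is correct and follows essentially the same approach as the paper: the uniqueness in (1), the two applications of the strict antipodal identity to get $d(O_x,O_z)=d(x,z)$ in (2), the centrality via $gO_x=O_{gx}$, and the symmetry in (4) via the measure-preserving involution are all exactly as in the paper's proof. Your treatment is in fact slightly more careful, since you justify that $O$ has order \emph{exactly} two and that the action is free by excluding the one-point case, and your formulation of (4) via the pushforward under $(x,y)\mapsto(x,O_y)$ is marginally cleaner than the paper's Fubini-based version, but these are cosmetic differences rather than a different route.
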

\begin{proof}
Part(1):
Let $x \in X$ then if $O_x$ is an antipode of $X$, strict antipodality says
$D = d(x,y) + d(y, O_x)$ for all $y \in X$. If $y$ were another antipode of $x$ then
$d(x,y)=D$ also and so $d(y,O_x)=0$ so $y=O_x$. Thus each element has a unique antipode and
$X$ is a uniquely antipodal space.

Part(2): Let $x,y \in X$. From strict antipodality $D = d(x,y) + d(y, O_x)$ and $D = d(O_x, O_y) + d(y, O_x)$. This implies $d(x, y)=d(O_x, O_y)$ and so the antipodal map $O: X \to X$ taking each $x$ to its unique
antipode $O_x$ is an isometry. $O \circ O =Id$ so it represents an element of order two in
$Iso(X)$, the group of isometries of $X$. If $g$ is any isometry of $X$ then
$d(gx,gO_x)=d(x,O_x)=D$ which implies $gO_x=O_{gx}$ by uniqueness of antipodes.
Thus $g \circ O = O \circ g$ and hence $O$ commutes with any isometry of $X$ in $Iso(X)$, i.e.,
it lies in the center of $Iso(X)$.

Part(3): This part follows immediately from (2) and basic facts about Euler characteristics from
algebraic topology.

Part(4): As $O: X \to X$ is an isometry it is $m$-measure preserving. Thus
$$Pr(d(x,y) \leq a) = \int_X (\int_{\{y | d(y,x) \leq a\}} m(y))m(x)
=\int_X (\int_{\{y | d(y,O_x) \leq a \}} m(y)) m(x) $$
as $O$ exchanges the sets $\{y | d(y,x) \leq a\}$ and $\{ y |d(y,O_x) \leq a\}$.
Finally strict antipodality lets us write the condition
$d(y,O_x) \leq a$ as $d(x,y) \geq D-a$. This completes the proof.

\end{proof}

We now show that the only closed, connected, positive dimensional Riemannian manifolds
which are strictly antipodal are spheres.

\begin{thm}
\label{thm:spheres}
Let $X$ be a closed, connected, Riemannian manifold which is strictly
antipodal. Then if $d=dim(X) \geq 1$ then $X$ is homeomorphic to $S^d$.
\end{thm}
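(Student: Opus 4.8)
The plan is to show that, for any point $p\in X$, the exponential map at $p$ identifies the complement $X\setminus\{O_p\}$ of its antipode with $\mathbb{R}^d$; since $X$ is compact Hausdorff and $O_p$ is not an isolated point (as $d:=\dim X\ge 1$), it will follow that $X$ is the one-point compactification of $\mathbb{R}^d$, and that space is $S^d$. Throughout I would use that $X$, being closed, is a complete Riemannian manifold, so the Hopf--Rinow theorem applies (any two points are joined by a minimizing geodesic and $\exp_p$ is defined on all of $T_pX$), and that by Theorem~\ref{thm:strictantipode} the antipodal map $O$ is a fixed-point-free isometric involution. The one consequence of strict antipodality I need is geodesic in nature: the equality $d(p,y)+d(y,O_p)=D=d(p,O_p)$ is an equality case of the triangle inequality, so \emph{every} $y\in X$ lies on some minimizing geodesic from $p$ to $O_p$.

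The first and main step is to upgrade this to: for every $p$ and every unit vector $v\in T_pX$, the geodesic $\gamma(t)=\exp_p(tv)$ is minimizing on all of $[0,D]$ and satisfies $\gamma(D)=O_p$. I would argue this by a concatenation trick. For small $\varepsilon>0$ (less than the injectivity radius at $p$) the segment $\gamma|_{[0,\varepsilon]}$ is the unique minimizing geodesic from $p$ to $z:=\gamma(\varepsilon)$, so $d(p,z)=\varepsilon$; gluing it to a minimizing geodesic $\sigma$ from $z$ to $O_p$, which has length $d(z,O_p)=D-\varepsilon$ by strict antipodality, produces a path from $p$ to $O_p$ of length exactly $D=d(p,O_p)$. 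A length-minimizing curve parametrized by arc length is a smooth geodesic, so this concatenation has no corner at $z$, whence $\sigma$ is the forward continuation of $\gamma$; thus $\gamma$ minimizes on $[0,D]$ and ends at $O_p$. Since nothing lies farther than $D$ from $p$, $\gamma$ cannot minimize past parameter $D$, so in \emph{every} direction the cut point of $p$ is $O_p$, occurring at distance exactly $D$. This ``Blaschke-like'' reconvergence of all geodesics from $p$ is exactly where strict antipodality is used, and I expect it to be the crux of the argument.

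With that in hand I would show that $\exp_p$ carries the open ball $B(0,D)\subset T_pX$ diffeomorphically onto $X\setminus\{O_p\}$. \emph{Surjectivity:} any $z\ne O_p$ has $0\le d(p,z)<D$ by strict antipodality, so a minimizing geodesic from $p$ realizes $z=\exp_p(d(p,z)u)$ with $d(p,z)u\in B(0,D)$; and no $v$ with $|v|<D$ can map to $O_p$, since then $d(p,O_p)\le|v|<D$. \emph{Injectivity:} if $\exp_p(v_1)=\exp_p(v_2)=z$ with $v_i\in B(0,D)$, the first step gives $|v_1|=|v_2|=d(p,z)<D$, so $z$ lies strictly before the cut point of $p$ along each direction $v_i/|v_i|$ and is therefore joined to $p$ by a unique minimizing geodesic, forcing $v_1=v_2$ (equivalently: glue the minimizing geodesic $p\to z$ in the direction $v_2$ to the minimizing geodesic $z\to O_p$ in the direction $v_1$; the resulting curve has length $D$, hence is a smooth geodesic with no corner at $z$, so $v_1=v_2$). \emph{Regularity:} since each $t\mapsto\exp_p(tv/|v|)$ minimizes on $[0,D]$, it has no conjugate point of $p$ strictly before parameter $D$, so $d\exp_p$ is nonsingular throughout $B(0,D)$. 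Hence $\exp_p|_{B(0,D)}$ is an injective local diffeomorphism with open image $X\setminus\{O_p\}$, so $X\setminus\{O_p\}\cong B(0,D)\cong\mathbb{R}^d$.

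Finally, since $X$ is compact Hausdorff and $X\setminus\{O_p\}$ is open with one-point complement $\{O_p\}$, a point which is not isolated because $\dim X\ge1$, the map that is the identity on $X\setminus\{O_p\}$ and sends $O_p$ to the point at infinity is a continuous bijection from $X$ onto the Alexandrov compactification of $X\setminus\{O_p\}$, hence a homeomorphism; as $X\setminus\{O_p\}\cong\mathbb{R}^d$, that compactification is $S^d$, and therefore $X$ is homeomorphic to $S^d$. The bulk of the work is in the geodesic step (every geodesic from $p$ minimizes out to the antipode, so $\{O_p\}$ is the entire cut locus of $p$) and in the injectivity clause, both of which must be derived purely from the equality case of the triangle inequality, since $X$ is not assumed homogeneous here; the concluding one-point-compactification argument is standard point-set topology.
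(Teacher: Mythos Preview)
Your argument is correct and follows essentially the same route as the paper: fix $p$, use the concatenation trick (a minimizing segment from $p$ glued to a minimizing segment to $O_p$ has length $D$ and hence must be an unbroken geodesic) to control $\exp_p$ on the ball of radius $D$, and then read off $S^d$ topologically. The only cosmetic differences are that you first isolate the ``Blaschke-like'' statement that every geodesic from $p$ minimizes out to $O_p$ (which makes the injectivity and $d(p,\exp_p v)=|v|$ claims cleaner than in the paper), and you finish via the one-point compactification of $X\setminus\{O_p\}$ rather than the paper's equivalent quotient $\overline{B}(0,D)/\partial B\cong S^d$.
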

\begin{proof}
Fix a point $p \in X$. As $X$ is closed and connected, it is geodesically complete and so the exponential map
$e: T_p(X) \to X$ is onto, continuous and smooth away from the cut locus of $p$.

If $D$ is the diameter of $X$, then the restriction of $e$ to the closed ball $B$ of radius $D$ in
$T_p(X)$ has $e: B \to X$ still onto.

If $y$ is a point of $X$ other than $p$ or $O_p$, the antipode of $p$, then $0 < d(p,y) < D$ and as
$D=d(p,y)+d(y,O_p)$ one sees that the concatenation of any minimal geodesic from $p$ to $y$
with any minimal geodesic from $y$ to $O_p$ has length $D=d(p,O_p)$ and hence is a path of minimal length from $p$ to $O_p$. However it is a standard fact that paths of minimum length between points in a geodesically complete Riemannian manifold are unbroken geodesics. Thus any minimal geodesic from $p$ to $y$ (of unit speed) must have matching velocity vector
with any minimal geodesic from $y$ to $O_p$ at the point $y$.

From this it follows that there is a unique minimal geodesic from $p$ to $y$ and from $y$ to
$O_p$ as if there were more than one of either, one could find a path of length $D$ from $p$ to $O_p$ consisting of a strictly broken geodesic path which contradicts $d(p,O_p)=D$.

This then implies that $e$ is injective when restricted to the interior of $B$ and so in particular,
$d(p,e(v)) = \|v \|$ for all $v \in Int(B)$. By continuity, this implies $d(p,e(v))=D$ for all
points $v$ on the boundary of $B$. However $O_p$ is the unique point of distance $D$ from $p$ and so we conclude that $e$ takes all of the boundary of $B$ to the single point $O_p$.

This implies that $e$ induces a bijective continuous map $\bar{e}: B/\sim \to X$
where $B/\sim$ is the closed ball with its boundary identified to a point. It is well-known that
$B/\sim$ is homeomorphic to $S^d$. Finally as $S^d$ is compact, and $X$ is Hausdorff, $\bar{e}$ is a homeomorphism. Furthermore it is smooth outside a single point corresponding to
the identification point of $S^d$.

\end{proof}

\end{document}